\newtheorem{theorem}{Theorem}[section]
\newtheorem{corollary}[theorem]{Corollary}
\newtheorem{lemma}[theorem]{Lemma}
\newtheorem{proposition}[theorem]{Proposition}
\theoremstyle{remark}
\theoremstyle{definition}
\newtheorem{definition}[theorem]{Definition}
\newcommand{\dist}{\mathrm{d}}
\newcommand{\ind}{\text{ind-}}
\begin{document}

\title{A coarse invariant for all metric spaces}

\author{M.~DeLyser}
\address{Saint Francis University, Loretto, PA 15940}
\email{mrd100@@francis.edu}

\author{B.~LaBuz}
\address{Saint Francis University, Loretto, PA 15940}
\email{blabuz@@francis.edu}

\author{B.~Wetsell}
\address{Saint Francis University, Loretto, PA 15940}
\email{brwst6@@francis.edu}

%\subjclass[2000]{Primary 55Q52; Secondary 55Q07}
\date{\today}

\begin{abstract}
In \cite{borno} an invariant of metric spaces under bornologous equivalences is defined. In \cite{coarse} this invariant is extended to coarse equivalences. In both papers the invariant is defined for a class of metric spaces called sigma stable. This paper extends the invariant to all metric  spaces and also gives an example of a space that is not sigma stable.
\end{abstract}

\maketitle
\tableofcontents

\medskip %Printed on \today.
\medskip

\section{Introduction}

Large scale geometry is the study of the large scale structure of metric spaces. Continuity is a small scale property of a function; one only needs to check the property for small distances. A property dual to continuity (in fact uniform continuity) is bornology. A function $f:X\to Y$ is bornologous if for each $N>0$ there is an $M>0$ such that for every $x,y\in X$, if $\dist(x,y)\leq N$, $\dist(x,y)\leq M$ \cite{roe}. Notice the only change from the definition of uniform continuity is the swapping of the orders of the real numbers $N$ and $M$ in the latter part of the statement. Bornology is a large scale property of a function; one only needs to check the property for large distances.

Roe \cite{roe} defines the coarse category with metric spaces as objects and close equivalence classes of coarse functions as morphisms. We say two functions $f:X\to Y$ and $g:X\to Y$ are close if there is some constant $K$ with $\dist(f(x),g(x))\leq K$ for all $x\in X$. A function is metrically proper if the inverse image of bounded sets are bounded. A function is coarse if it is bornologous and proper. Two metric spaces $X$ and $Y$ are coarsely equivalent if there are coarse functions $f:X\to Y$ and $g:Y\to X$ such that $g\circ f$ is close to the identity function on $X$ and $f\circ g$ is close to the identity function on $Y$.

It is of interest to study the isomorphisms of the coarse category: when are two metric spaces coarsely equivalent? Typically to show two spaces are coarsely equivalent we construct the coarse functions $f$ and $g$ that form the coarse equivalence. The basic example of coarsely equivalent metric spaces is $\mathbb R$ and $\mathbb Z$ under the usual metrics. We take $f:\mathbb Z\to\mathbb R$ to be the inclusion function and $g:\mathbb R\to\mathbb Z$ to be the floor function. It is an easy exercise to check that these functions form a coarse equivalence.

How do we show that two spaces are not coarsely equivalent? We cannot check that all possible functions do not form an equivalence. In \cite{borno} an invariant of metric spaces under bornologous equivalences is defined. This invariant provides a way to detect if two spaces are not bornologously equivalent. The bornologous category is a more restrictive category than the coarse category; the compositions are required to be the identity on the nose. Equivalently, a bornologous bijection $f:X\to Y$ whose inverse is bornologous is required. Thus if two spaces are bornologously equivalent then they are coarsely equivalent. In \cite{coarse} the invariant is extended to the coarse category. In both of these papers the invariant is only defined for a class of spaces called $\sigma$-stable spaces. In this paper we simultaneously extend the results from \cite{borno} to the coarse category and to all metric spaces.

We review the construction from \cite{borno}. Suppose $N>0$. Given a metric space $X$ and a basepoint $x_0\in X$, an $N$-sequence in $X$ based at $x_0$ is an infinite list $x_0,x_1,\ldots$ of points in $X$ such that $\dist(x_i,x_{i+1})\leq N$ for all $i\geq 0$. The following is a nice interpretation of a bornologous function. A function $f:X\to Y$ is bornologous if and only if for each $N>0$ there is an $M>0$ such that $f$ sends $N$-sequences in $X$ to $M$-sequences in $Y$.

We are only interested in sequences that go to infinity. An $N$-sequence $x_0,x_1,x_2,\ldots$ goes to infinity, $(x_i)\to\infty$, if $\lim_{i\to \infty}\dist(x_i,x_0)=\infty$. We want to consider an equivalence relation between sequences. Given two $N$-sequences $s$ and $t$ in $X$ based at $x_0$ that go to infinity define $s$ and $t$ to be related, $s\sim t$, if $s$ is a subsequence of $t$ or $t$ is a subsequence of $s$. If $t$ is a subsequence of $s$ we say that $s$ is a supersequence of $t$. Define $s$ and $t$ to be equivalent, $s\approx t$,  if there is a finite list of sequences $s_i$ such that $s\sim s_1\sim s_2\sim \cdots \sim s_n\sim t$. Let $[s]_N$ denote the equivalence class of $s$ and let $\sigma_N(X,x_0)$ be the set of equivalence classes.

For each integer $N>0$ there is a function $\phi_N:\sigma_N(X,x_0)\to \sigma_{N+1}(X,x_0)$ that sends an equivalence class $[s]_N$ to the equivalence class $[s]_{N+1}$. A space $X$ is called $\sigma$-stable if there is an integer $K>0$ such that $\phi_N$ is a bijection for each $N\geq K$. If $X$ is $\sigma$-stable define $\sigma(X,x_0)$ to be the cardinality of $\sigma_K(X,x_0)$. The following theorem of \cite{borno} says that it is an invariant.

\begin{theorem}
Suppose $f:X\to Y$ is a bornologous equivalence between metric spaces. Let $x_0$ be a basepoint of $X$ and set $y_0=f(x_0)$. Suppose $X$ and $Y$ are $\sigma$-stable. Then $\sigma(X,x_0)=\sigma(Y,y_0)$.
\end{theorem}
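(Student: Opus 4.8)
The plan is to show that the bornologous equivalence $f$ and its inverse $g=f^{-1}$ induce maps between the sets $\sigma_N(X,x_0)$ and $\sigma_M(Y,y_0)$ at suitably related scales, and that the relations $g\circ f=\mathrm{id}_X$ and $f\circ g=\mathrm{id}_Y$ force the relevant composites of these induced maps to coincide with the connecting maps $\phi_N$. Once $X$ and $Y$ are assumed $\sigma$-stable, the connecting maps are bijections at large scales, and this pins down the cardinalities.

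First I would construct $f_*$. Since $f$ is bornologous, for each $N$ there is an $M=M(N)$ (which I may freely enlarge) so that $f$ carries every $N$-sequence to an $M$-sequence; as $f(x_0)=y_0$, based sequences go to based sequences. The essential point is that $f$ carries infinity-sequences to infinity-sequences: if $(x_i)\to\infty$ but $(f(x_i))\not\to\infty$, then $\dist(f(x_i),y_0)\leq R$ for some $R$ and infinitely many $i$; applying the bornologous inverse $g$ at scale $R$ gives $\dist(x_i,x_0)\leq L(R)$ for those same $i$, contradicting $(x_i)\to\infty$. Because $f$ sends subsequences to subsequences it preserves $\sim$ and hence $\approx$, so $[s]_N\mapsto[f(s)]_M$ is a well-defined map $f_*\colon\sigma_N(X,x_0)\to\sigma_M(Y,y_0)$. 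The same construction applied to $g$ yields $g_*$.

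The heart of the argument is the identity $g_*\circ f_*=\phi_{L-1}\circ\cdots\circ\phi_N\colon\sigma_N(X,x_0)\to\sigma_L(X,x_0)$, where $L$ is a bornologous bound for $g$ at scale $M$: indeed $g_*(f_*([s]_N))=[g(f(s))]_L=[s]_L$ since $g\circ f=\mathrm{id}_X$, and the right-hand side is exactly the image of $[s]_N$ under the natural connecting map. Symmetrically, $f_*\circ g_*$ is the connecting map on the $Y$ side, using $f\circ g=\mathrm{id}_Y$. Now I would fix $N\geq K_X$ and then choose $M\geq K_Y$ and $L\geq K_X$ in turn, which is legitimate because the bornologous bounds may always be enlarged. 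By $\sigma$-stability every $\phi$ beyond $K_X$ and every connecting map beyond $K_Y$ is a bijection, so both composites above are bijections. Hence the single map $g_*\colon\sigma_M(Y,y_0)\to\sigma_L(X,x_0)$ is surjective (because $g_*\circ f_*$ is onto) and injective (because $f_*\circ g_*$ is one-to-one), so it is a bijection. Since $M\geq K_Y$ and $L\geq K_X$, this yields $\sigma(Y,y_0)=|\sigma_M(Y,y_0)|=|\sigma_L(X,x_0)|=\sigma(X,x_0)$.

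I expect the main obstacle to be the bookkeeping of the four scales $N,M,L,P$ (with $P$ a bornologous bound for $f$ at scale $L$) and the verification that all of them can be pushed past the stability thresholds $K_X$ and $K_Y$ simultaneously, together with the one genuinely non-formal point: that a bornologous equivalence preserves infinity-sequences. The latter is precisely where the hypothesis that the inverse $g$ is bornologous (equivalently, that $f$ is proper) is used; a merely bornologous map need not preserve the property of going to infinity, so this step cannot be skipped.
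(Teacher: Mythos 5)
Your proof is correct, and its skeleton---induced maps $f_*$ and $g_*$ at suitably enlarged scales whose composites equal the bonding maps $\phi_{NL}$ and $\psi_{MP}$, followed by a stability argument that pins down cardinalities---is essentially the approach the paper takes, in the form of its coarse generalization (the theorem that a coarse equivalence makes $\ind\sigma(X)$ and $\ind\sigma(Y)$ equivalent as direct sequences). The differences are worth recording. Because you work in the bornologous category, $g\circ f$ is the identity on the nose, so $g_*\circ f_*=\phi_{NL}$ holds \emph{exactly}; in the coarse setting the paper must instead show that $x_0,g(f(x_0)),g(f(x_1)),\ldots$ is merely equivalent to $x_0,x_1,\ldots$, which it does via the interleaved supersequence $x_0,g(f(x_0)),x_0,x_1,g(f(x_1)),x_1,\ldots$---the one genuinely non-formal step there, and one your setting legitimately lets you skip (likewise you need no prepended basepoint in defining $g_*$, since $g(y_0)=x_0$ exactly). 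Conversely, you make explicit a point the paper leaves implicit: that images of sequences going to infinity still go to infinity, which you correctly derive from bornologousness of the inverse; in the coarse setting this is exactly the role of properness, and you are right that it cannot be skipped. Finally, note what each endgame buys: the paper's formulation as an equivalence of direct sequences requires no stability hypothesis and hence extends to all metric spaces, whereas your finish---$g_*$ is surjective because $g_*\circ f_*=\phi_{NL}$ is onto and injective because $f_*\circ g_*=\psi_{MP}$ is one-to-one, giving $\lvert\sigma_M(Y,y_0)\rvert=\lvert\sigma_L(X,x_0)\rvert$ with $M\geq K_Y$ and $L\geq K_X$---is a clean and valid way to conclude under $\sigma$-stability, but it is precisely the step that does not survive without that hypothesis. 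Your scale bookkeeping ($N,M,L,P$ all pushed past the thresholds, which is permissible since bornologous bounds may always be enlarged) is sound as stated.
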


\section{The invariant}

We wish to extend the above invariant to all metric spaces. We do so by considering the direct sequence $\{\sigma_N(X,x_0),\phi_N\}$. A direct sequence of sets is a family of sets $\{X_i\}$, $i\in\mathbb N$, together with a family of functions $\{\phi_i:X_i\to X_{i+1}\}$ called bonding functions \cite{Bourbaki}. For $i<j$ we write $\phi_{j-1}\circ\cdots\circ\phi_{i+1}\circ\phi_i=\phi_{ij}$ so that $\phi_{ij}:X_i\to X_j$. We also let $\phi_{ii}$ be the identity on $X_i$.

Typically a morphism between direct sequences are defined as level morphisms. We find it more convenient to allow more general morphisms. We define a morphism from a direct sequence $\{X_i,\phi_i\}$ to a direct sequence $\{Y_i,\psi_i\}$ as a sequence of functions $f_i:X_i\to Y_{u(i)}$ where $u:\mathbb N\to \mathbb N$ such that if $i<j$, $\psi_{u(i)u(j)}\circ f_i=f_j\circ \phi_{ij}$. We can assume that $u(i)\geq i$. If not, we create the new sequence of functions $\{f'_i=\psi_{u(i)i}\circ f_i\}$. Similarly we can assume that if $i<j$, $u(i)<u(j)$. If not we define $f'_j=\psi_{u(j)u(i)}\circ f_j$.

We define two direct sequences $\{X_i,\phi_i\}$ and $\{Y_i,\psi_i\}$ to be equivalent if there are morphisms $\{f_i:X_i\to Y_{u(i)}\}$ and $\{g_i:Y_i\to X_{v(i)}\}$ such that $g_{u(i)}\circ f_i=\phi_{i~v(u(i))}$ and $f_{v(i)}\circ g_i=\psi_{i~u(v(i))}$.

If two direct sequences are equivalent then there is a bijection between the corresponding direct limits (see the Appendix).

\begin{definition}
Let $X$ be a metric space with basepoint $x_0$. Consider the direct sequence $\{\sigma_N(X,x_0),\phi_N\}$ where $\phi_N$ sends an equivalence class $[s]_N$ to $[s]_{N+1}$. We denote this sequence as $\ind\sigma(X,x_0)$ and its direct limit $\varinjlim \sigma_N(X,x_0)$ as $\sigma(X,x_0)$. The ind stands for inductive sequence, another term for direct sequence.
\end{definition}

Notice that in the case of a $\sigma$-stable space $X$, we have that the cardinality of $\sigma(X,x_0)$ is equal to the value $\sigma(X,x_0)$ defined in \cite{borno} and \cite{coarse} so this definition can be thought of as a generalization of that concept that applies to all metric spaces.

First we show that the choice of basepoint does not matter. Thus we can suppress the notation for basepoint and just write $\ind\sigma(X)$.

\begin{proposition}
Let $X$ be a metric space with basepoint $x_0$. Given $y_0\in X$, $\ind\sigma(X,x_0)$ is equivalent to $\ind\sigma(X,y_0)$.
\end{proposition}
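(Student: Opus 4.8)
The plan is to build the two required morphisms by the simplest possible device: prepending the ``other'' basepoint to each sequence. Let $D$ be a nonnegative integer with $D\geq\dist(x_0,y_0)$, and set $u(N)=v(N)=N+D$. Given an $N$-sequence $s=x_0,x_1,x_2,\ldots$ based at $x_0$ that goes to infinity, the list $y_0,x_0,x_1,x_2,\ldots$ has consecutive distances bounded by $\max(N,\dist(x_0,y_0))\leq N+D$, so it is a $u(N)$-sequence based at $y_0$; and since $\dist(x_i,y_0)\geq\dist(x_i,x_0)-\dist(x_0,y_0)\to\infty$, it goes to infinity as well. I would therefore define $f_N\colon\sigma_N(X,x_0)\to\sigma_{u(N)}(X,y_0)$ by sending $[s]_N$ to the class of $y_0,x_0,x_1,\ldots$, and symmetrically define $g_N\colon\sigma_N(X,y_0)\to\sigma_{v(N)}(X,x_0)$ by prepending $x_0$.

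First I would check that $f_N$ (and by symmetry $g_N$) is well defined and a morphism. Prepending a fixed first point carries a subsequence to a subsequence, so $s\sim t$ forces the prepended lists to be related; hence $\approx$-equivalent inputs give $\approx$-equivalent outputs and $f_N$ descends to equivalence classes. The bonding relation $\psi_{u(N)u(N')}\circ f_N=f_{N'}\circ\phi_{NN'}$ for $N<N'$ is immediate, since both sides send $[s]_N$ to the class of the same list $y_0,x_0,x_1,\ldots$ viewed at level $u(N')$. Note that $u$ is strictly increasing with $u(N)\geq N$, matching the normalization conventions for morphisms.

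It then remains to verify the two equivalence identities $g_{u(N)}\circ f_N=\phi_{N,\,v(u(N))}$ and $f_{v(N)}\circ g_N=\psi_{N,\,u(v(N))}$. For the first, the composite sends $[s]_N$ to the class of $x_0,y_0,x_0,x_1,x_2,\ldots$ at level $v(u(N))=N+2D$, whereas $\phi_{N,\,v(u(N))}([s]_N)$ is the class of $x_0,x_1,x_2,\ldots$ at the same level. These agree because $x_0,x_1,x_2,\ldots$ is a subsequence of $x_0,y_0,x_0,x_1,x_2,\ldots$: one deletes the initial detour $y_0,x_0$ while retaining the leading basepoint $x_0$, so the two lists are related and hence equivalent. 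The second identity is obtained by interchanging the roles of $x_0$ and $y_0$.

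The argument is essentially a chain of routine verifications, and the only point requiring care is the level bookkeeping. I must ensure that every prepended or doubly prepended list really is a sequence at the asserted index, so that each $\phi$ and $\psi$ is applied where it is defined, and that each such list still goes to infinity, since $\sigma_N$ records only sequences tending to infinity. Once $D$ is fixed so that all inserted steps have length at most $D$, these constraints hold simultaneously, and the detour-deletion observation is exactly what collapses each composite onto the corresponding bonding map.
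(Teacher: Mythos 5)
Your proof is correct and takes essentially the same approach as the paper's: both build the equivalence by prepending the other basepoint and observe that each composite collapses to a bonding map because deleting the detour $y_0,x_0$ leaves a subsequence. The only difference is cosmetic index bookkeeping --- you shift uniformly with $u(N)=v(N)=N+D$, while the paper maps level $N$ into level $M$ for $N<M$ and into level $N$ for $N\geq M$; both choices satisfy the required identities.
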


\begin{proof}
Choose an integer $M\geq\dist(x_0,y_0)$. For each $N<M$, define $f_N:\sigma_N(X,x_0)\to\sigma_M(X,y_0)$ to send $[s]_N\in\sigma_N(X,x_0)$, $s=x_0,x_1,\ldots$, to the equivalence class of the sequence $y_0,x_0,x_1,x_2,\ldots$. For $N\geq M$, we define $f_N:\sigma_N(X,x_0)\to \sigma_N(X,y_0)$ in a similar fashion, attaching the point $y_0$ to the beginning of a sequence. We define functions $g_N:\sigma_N(X,y_0)\to \sigma_M(X,x_0)$ for $N<M$ and $g_N:\sigma_N(X,y_0)\to \sigma_N(X,x_0)$ for $N\geq M$ analogously.

Let us see that the composition $g_M\circ f_N$ is equal to $\phi_{NM}$ for $N<M$. The composition sends the equivalence class of a sequence $x_0,x_1,x_2,\ldots$ to the equivalence class of a sequence $x_0,y_0,x_0,x_1,x_2,\ldots$ which is clearly the same as the equivalence class of $x_0,x_1,\ldots$. Similarly we have that the composition $g_N\circ f_N$ is equal to the identity on $\sigma_N(X,x_0)$ for $N\geq M$. The opposite compositions are similar as well.
\end{proof}

\begin{theorem}
Suppose $X$ and $Y$ are coarsely equivalent metric spaces. Then $\ind\sigma(X)$ is equivalent to $\ind\sigma(Y)$.
\end{theorem}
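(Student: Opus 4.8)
The plan is to build the equivalence of $\ind\sigma(X)$ and $\ind\sigma(Y)$ directly from the data of a coarse equivalence. Fix coarse maps $f:X\to Y$ and $g:Y\to X$ with $g\circ f$ and $f\circ g$ close to the respective identities, and let $K$ be a single constant with $\dist(g(f(x)),x)\le K$ and $\dist(f(g(y)),y)\le K$ for all $x\in X$, $y\in Y$. Since $f$ is bornologous there is an increasing function $u$ with $u(N)\ge N$ carrying $N$-sequences to $u(N)$-sequences, and likewise an increasing $v$ for $g$; because these are only required to be upper bounds I may enlarge them so that $v(N)\ge N+2K$ for all $N$. I would invoke the basepoint-independence Proposition to fix a basepoint $x_0$ of $X$ and take $y_0=f(x_0)$ as the basepoint of $Y$, so that $f\circ s$ is automatically based at $y_0$.

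First I would define the two candidate morphisms. For $f$ put $f_N:\sigma_N(X,x_0)\to\sigma_{u(N)}(Y,y_0)$, $[s]_N\mapsto[f\circ s]_{u(N)}$. For $g$, since $g(y_0)=g(f(x_0))$ need not equal $x_0$ but lies within $K$ of it, I would prepend the basepoint and set $g_N:\sigma_N(Y,y_0)\to\sigma_{v(N)}(X,x_0)$, $[t]_N\mapsto[x_0,g\circ t]_{v(N)}$; this is a legitimate $v(N)$-sequence because $\dist(x_0,g(y_0))\le K\le v(N)$. Well-definedness of both assignments rests on three routine facts: bornology guarantees the images are $u(N)$- (resp.\ $v(N)$-) sequences, properness guarantees that a sequence going to infinity has image going to infinity, and the subsequence relation (hence $\approx$) is preserved by applying $f$ or $g$ and by prepending a fixed point. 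The morphism identities $\psi_{u(i)u(j)}\circ f_i=f_j\circ\phi_{ij}$ and $\phi_{v(i)v(j)}\circ g_i=g_j\circ\psi_{ij}$ are then immediate, since both sides of each send a class to the class of the same image sequence read at the higher index.

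The heart of the argument is verifying the two equivalence identities $g_{u(i)}\circ f_i=\phi_{i\,v(u(i))}$ and $f_{v(i)}\circ g_i=\psi_{i\,u(v(i))}$. Computing the first, $g_{u(N)}\circ f_N$ sends $[s]_N$, with $s=x_0,x_1,\dots$, to the class of $b=x_0,g(f(x_0)),g(f(x_1)),\dots$, a sequence whose $(i+1)$st term is within $K$ of $x_i$. I would show $[b]_M=[s]_M$ for every $M\ge N+2K$ by interleaving: the sequence $w=x_0,g(f(x_0)),x_1,g(f(x_1)),x_2,\dots$ has consecutive gaps bounded by $N+K$, so it is an $M$-sequence, and both $s$ and $b$ are subsequences of $w$, giving $s\sim w\sim b$ and hence $s\approx b$ in $\sigma_M$. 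Since I arranged $v(u(N))\ge u(N)+2K\ge N+2K$, this yields exactly $g_{u(N)}\circ f_N=\phi_{N\,v(u(N))}$. The second identity follows from the same interleaving argument applied to $f\circ g$ and the prepended basepoint $y_0$. Basepoint-independence then upgrades the equivalence of $\ind\sigma(X,x_0)$ and $\ind\sigma(Y,y_0)$ to the basepoint-free statement.

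I expect the main obstacle to be precisely this closeness step: reconciling the fact that $g\circ f$ only returns each point to within $K$ of where it started, not exactly, together with the index shift caused by prepending $x_0$ to keep $g_N$ correctly based. The interleaving sequence $w$ is the device that absorbs both issues at once, but one must check carefully that its gaps stay bounded by $N+K$, that $b$ goes to infinity (which follows from $g(f(x_i))$ tracking $x_i$ within $K$ rather than from properness), and that the bornology bounds $u,v$ can be inflated to force $v(u(N))\ge N+2K$ without disturbing the morphism relations already established.
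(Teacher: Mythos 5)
Your proposal is correct and follows essentially the same route as the paper: you apply $f$ termwise for one morphism, prepend the basepoint $x_0$ to $g\circ t$ for the other, and verify that the composite equals the appropriate bonding map by exhibiting an interleaved common supersequence of $s$ and $x_0,g(f(x_0)),g(f(x_1)),\ldots$, which is exactly the paper's device (the paper's supersequence $x_0,g(f(x_0)),x_0,x_1,g(f(x_1)),x_1,\ldots$ differs from your $w$ only in repeating each $x_i$). Your additional care with the monotone gauges $u,v$, with properness guaranteeing that image sequences go to infinity, and with the inflation $v(N)\ge N+2K$ merely makes explicit bookkeeping that the paper leaves implicit.
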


\begin{proof}
Suppose $f:X\to Y$ and $g:Y\to X$ make up the coarse equivalence. Given $N\in\mathbb N$, since $f$ is bornologous there is an $M>0$ so that if $\dist(x,y)\leq N$, $\dist(f(x),f(y))\leq M$. We can assume $M$ is an integer greater than $N$. Set $f(x_0)=y_0$. Thus we have a well defined function $f_N:\sigma_N(X,x_0)\to \sigma_M(Y,y_0)$ that sends the equivalence class of a sequence $x_0,x_1,\ldots$ to $f(x_0),f(x_1),\ldots$. 

For the opposite morphism, let $K$ be an integer so that $\dist(g(f(x)),x)\leq K$ for all $x\in X$. Given $N\in\mathbb N$, since $g$ is bornologous there is an $M>0$ so that if $\dist(x,y)\leq N$, $\dist(g(x),g(y))\leq M$. We can take $M$ to be an integer greater than $N$ and $K$. We define a function $f_N:\sigma_N(Y,y_0)\to \sigma_M(X,x_0)$ that sends the equivalence class of a sequence $y_0,y_1,\ldots$ to $x_0,g(y_0),g(y_1),\ldots$.

We check that the following diagram commutes.

\begin{diagram}
\sigma_L(X,x_0)  &             &                 \\
                 & \luTo^{g_M} &                  \\
\uTo^{\phi_{NL}} &             & \sigma_M(Y,y_0) \\
                 & \ruTo_{f_N} &                 \\
\sigma_N(X,x_0)  &             &                \\
\end{diagram}

Let $[s]_N\in\sigma_N(X,x_0)$, say $s=x_0,x_1,\ldots$. Then $g_M(f_N([s]_N))$ is the equivalence class of the sequence $x_0,g(f(x_0)),g(f(x_1)),\ldots$. This sequence is equivalent to $x_0,x_1,\ldots$ in $\sigma_L(X,x_0)$ since the sequence $x_0,g(f(x_0)),x_0,x_1,g(f(x_1)),x_1,\ldots$ is a supersequence of both.

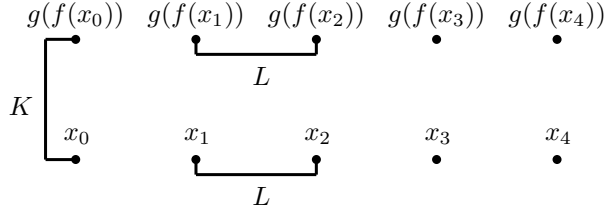
\begin{figure}[h]

\begin{pspicture}(0,-1.3829688)(8.242812,1.3829688)
\psdots[dotsize=0.12](0.9809375,0.88453126)
\psdots[dotsize=0.12](2.5809374,0.88453126)
\psdots[dotsize=0.12](4.1809373,0.88453126)
\psdots[dotsize=0.12](5.7809377,0.88453126)
\psdots[dotsize=0.12](7.3809376,0.88453126)
\psdots[dotsize=0.12](0.9809375,-0.71546876)
\psdots[dotsize=0.12](2.5809374,-0.71546876)
\psdots[dotsize=0.12](4.1809373,-0.71546876)
\psdots[dotsize=0.12](5.7809377,-0.71546876)
\psdots[dotsize=0.12](7.3809376,-0.71546876)
\usefont{T1}{ptm}{m}{n}
\rput(0.25234374,-0.00546875){$K$}
\usefont{T1}{ptm}{m}{n}
\rput(3.4523437,0.39453125){$L$}
\usefont{T1}{ptm}{m}{n}
\rput(1.0023438,-0.40546876){$x_0$}
\usefont{T1}{ptm}{m}{n}
\rput(2.6023438,-0.40546876){$x_1$}
\usefont{T1}{ptm}{m}{n}
\rput(5.802344,-0.40546876){$x_3$}
\usefont{T1}{ptm}{m}{n}
\rput(7.4023438,-0.40546876){$x_4$}
\usefont{T1}{ptm}{m}{n}
\rput(0.98234373,1.1945312){$g(f(x_0))$}
\usefont{T1}{ptm}{m}{n}
\rput(2.5823438,1.1945312){$g(f(x_1))$}
\usefont{T1}{ptm}{m}{n}
\rput(4.182344,1.1945312){$g(f(x_2))$}
\usefont{T1}{ptm}{m}{n}
\rput(5.782344,1.1945312){$g(f(x_3))$}
\usefont{T1}{ptm}{m}{n}
\rput(7.382344,1.1945312){$g(f(x_4))$}
\usefont{T1}{ptm}{m}{n}
\rput(3.4523437,-1.2054688){$L$}
\psline[linewidth=0.04cm](0.9809375,0.88453126)(0.5809375,0.88453126)
\psline[linewidth=0.04cm](0.5809375,0.88453126)(0.5809375,-0.71546876)
\psline[linewidth=0.04cm](0.5809375,-0.71546876)(0.9809375,-0.71546876)
\usefont{T1}{ptm}{m}{n}
\rput(4.202344,-0.40546876){$x_2$}
\psline[linewidth=0.04cm](2.5809374,-0.71546876)(2.5809374,-0.91546875)
\psline[linewidth=0.04cm](2.5809374,-0.91546875)(4.1809373,-0.91546875)
\psline[linewidth=0.04cm](4.1809373,-0.91546875)(4.1809373,-0.71546876)
\psline[linewidth=0.04cm](2.5809374,0.88453126)(2.5809374,0.6845313)
\psline[linewidth=0.04cm](2.5809374,0.6845313)(4.1809373,0.6845313)
\psline[linewidth=0.04cm](4.1809373,0.6845313)(4.1809373,0.88453126)
\end{pspicture} 

\caption{The two sequences are equivalent.}
\end{figure}

The proof that the diagram below commutes is similar.

\begin{diagram}
                 &             &  \sigma_L(Y,y_0)   \\
                & \ruTo^{f_M} &                  \\
\sigma_M(X,x_0)  &             & \uTo_{\psi_{NL}} \\
                 & \luTo_{g_N} &                 \\
                 &             & \sigma_N(Y,y_0)   \\
\end{diagram}

\end{proof}

\section{A space that is not $\sigma$-stable}

As motivation for the generalization of the construction of \cite{borno} to all metric spaces we give an example of a space that is not $\sigma$-stable. This example will also serve as an illustration of the invariant and as motivation for the use of direct sequences rather than merely a direct limit.

Given a family of pointed metric spaces $(X_{\alpha},x_{\alpha})$ we define their metric wedge $\bigvee (X_{\alpha},x_{\alpha})$ as the wedge with the following metric. Given $x,y\in \bigvee (X_{\alpha},x_{\alpha})$, $\dist(x,y)=\begin{cases} 
\dist (x,y) & \text{if } x,y\in X_{\alpha} \text{ for some } \alpha\\
\dist (x,x_{\alpha})+\dist(x_{\beta},y) & \text{if } x\in X_\alpha \text{ and } y\in X_{\beta} \text{ with } \alpha\neq\beta
\end{cases}$.

Let the open book $B$ be the metric wedge of rays $B_i= [0,\infty)$, $i\in\mathbb N$, based at the points $0$. Denote the wedge point as $x_0$.

\begin{lemma}\label{BookLemma}
Suppose $s$ is an $N$-sequence in $B$ based at the wedge point $x_0$ that goes to infinity. Then there is an $M>0$ and $k\in\mathbb N$ so that for each $n\geq M$, $s_n$ lies on the ray $B_k$. Further, if $t$ is an $N$-sequence in $B$ based at the $x_0$ that goes to infinity with $t\sim s$, then there is an $R>0$ such that for all $n\geq R$, $t_n$ lies on $B_k$.
\end{lemma}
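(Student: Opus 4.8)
The plan is to exploit the defining feature of the wedge metric: to pass from one ray to a different ray a sequence must travel through the wedge point. Concretely, if $s_n$ lies on $B_i$ and $s_{n+1}$ lies on $B_j$ with $i\neq j$, then by definition of the metric $\dist(s_n,s_{n+1})=\dist(s_n,x_0)+\dist(x_0,s_{n+1})$, so in particular $\dist(s_n,x_0)\leq N$ and $\dist(x_0,s_{n+1})\leq N$. I would record this as the key observation: a ray-switch between the $n$-th and $(n+1)$-st terms forces both terms to lie within distance $N$ of $x_0$.

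For the first assertion, since $s$ goes to infinity there is an $M$ with $\dist(s_n,x_0)>N$ for all $n\geq M$. By the key observation no ray-switch can occur at any index $n\geq M$, so consecutive terms $s_n,s_{n+1}$ lie on a common ray for all such $n$; moreover each such $s_n$ is not the wedge point, hence lies on a \emph{unique} ray. An induction starting at $n=M$ then shows that all of $s_M,s_{M+1},\ldots$ lie on a single ray, which we name $B_k$.

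For the second assertion I would split on the two ways $t\sim s$ can hold. If $t$ is a subsequence of $s$, write $t_m=s_{n_m}$ with $n_m$ strictly increasing in $m$; since $n_m\to\infty$, for large $m$ we have $n_m\geq M$ and hence $t_m\in B_k$. If instead $s$ is a subsequence of $t$, I would first apply the already-proved first assertion to $t$ (which by hypothesis also goes to infinity) to obtain $M'$ and $k'$ with $t_n\in B_{k'}$ for all $n\geq M'$. Writing $s_n=t_{m_n}$, for $n$ large enough that both $n\geq M$ and $m_n\geq M'$ the term $s_n$ lies simultaneously on $B_k$ and on $B_{k'}$ and is distinct from the wedge point, forcing $k'=k$. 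Taking $R=M'$ finishes this case.

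The only real subtlety is this last case: one cannot read off the eventual ray of $t$ directly from that of $s$, so the argument must run through applying the first assertion to $t$ and then matching the two eventual rays along the common subsequence $s$, using that points far from the wedge point belong to a unique ray. Everything else is a direct consequence of the wedge metric together with the hypothesis that the sequences escape to infinity.
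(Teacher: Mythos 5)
Your proposal is correct and takes essentially the same approach as the paper: escape to infinity rules out ray-switching (since a switch forces consecutive terms within $N$ of $x_0$), and an induction then pins the tail of $s$ to a single ray $B_k$. The only difference is organizational: the paper handles both directions of $t\sim s$ uniformly by choosing $R\geq P$ with $t_R=s_n$ for some $n\geq M$ and re-running the same induction from index $R$, whereas you split into cases and, in the supersequence case, apply the first assertion to $t$ and match the two eventual rays along the shared terms---both are valid.
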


\begin{proof}
Since $s$ goes to infinity there is a $M>0$ such that $\dist(s_n,x_0)\geq N+1$ for all $n\geq M$. Say $s_M\in B_k$. Since the distance from $s_M$ to any other ray is at least $N+1$, $s_{M+1}$ must also lie on $B_k$. By induction, we can see that $s_n$ lies on $B_k$ for all $n\geq M$.

Since $t$ goes to infinity, there is $P>0$ such that $\dist(t_n,x_0)\geq N+1$ for all $n\geq P$. Choose $R\geq P$ so that $t_R=s_n$ for some $n\geq M$. Thus $t_R$ is on $B_k$. We have $t_n$ lying on $B_k$ for all $n\geq R$ as above. 
\end{proof}

\begin{theorem}\label{indsigmaB}
Let $s_i$ be the sequence in $B$ based at $x_0$ and lying on $B_i$ where each term $s_{in}=n$, $n\in\mathbb N\cup\{0\}$. Let $N\geq 1$. Then $\sigma_N(B,x_0)=\{[s_1],[s_2],\ldots\}$.
\end{theorem}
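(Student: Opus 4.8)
The plan is to establish the set equality by proving two inclusions and then checking that the listed classes are pairwise distinct. The inclusion $\{[s_1],[s_2],\ldots\}\subseteq\sigma_N(B,x_0)$ is immediate: each $s_i$ has consecutive terms differing by $1\le N$, so it is an $N$-sequence, and $\dist(s_{in},x_0)=n\to\infty$, so $s_i$ goes to infinity and determines a class $[s_i]_N\in\sigma_N(B,x_0)$.

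For the reverse inclusion I would start from an arbitrary $N$-sequence $s$ based at $x_0$ that goes to infinity. By Lemma \ref{BookLemma} there are $M$ and $k$ with $s_n\in B_k$ for all $n\ge M$, so it suffices to prove $s\approx s_k$; this gives $[s]_N=[s_k]_N$ and hence $\sigma_N(B,x_0)\subseteq\{[s_1],[s_2],\ldots\}$. I would prove $s\approx s_k$ by exhibiting a single $N$-sequence $t$ based at $x_0$ that goes to infinity and has both $s$ and $s_k$ as subsequences, so that $s\sim t\sim s_k$.

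The construction of $t$ is where I expect the real work to lie, the difficulty being that $s$ need not be monotone and its finite prefix $s_0,\ldots,s_{M-1}$ may wander onto other rays. On the tail ($n\ge M$, where $s_n\in B_k$) I would insert points of $B_k$ between $s_n$ and $s_{n+1}$ so that every consecutive step has size at most $1\le N$, and then designate, as the points of the $s_k$-subsequence, the positions at which this refined walk first attains each successive integer. Because $\dist(s_n,x_0)\to\infty$ and the steps are at most $1$, every integer is eventually attained; and since attaining $F+1$ requires having previously attained $F$, the first-passage positions occur in increasing order of the integer, so the designated points list the integers from some level onward in increasing order and go to infinity. Thus the tail of $t$ is a supersequence of the tail of $s$ that also contains a cofinal portion of $s_k$. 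The finite prefix is handled separately: since any two points of $B$ are joined by steps of size at most $1$, I can prepend a finite $1$-path from $x_0$ that passes through $s_0,\ldots,s_{M-1}$ and through the finitely many small integers of $B_k$, in their respective orders, and ends where the tail begins. Being finite, the prefix does not affect the fact that $t$ goes to infinity, and $t$ is then the desired common supersequence.

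Finally, to see the classes are distinct, suppose $s_i\approx s_j$ via a chain $s_i=u_0\sim u_1\sim\cdots\sim u_m=s_j$. Each $u_l$ goes to infinity, so by Lemma \ref{BookLemma} it has a well-defined eventual ray, and the ``further'' clause of that lemma shows this eventual ray is preserved under $\sim$; hence all the $u_l$ share one eventual ray. Since $s_i$ lies on $B_i$ and $s_j$ on $B_j$, this forces $i=j$, so $[s_i]_N\neq[s_j]_N$ for $i\neq j$ and $\sigma_N(B,x_0)$ is exactly the countably infinite list $\{[s_1],[s_2],\ldots\}$. I expect the common-supersequence construction, in particular arranging the $s_k$-points to appear in increasing order despite the non-monotonicity of $s$, to be the main technical hurdle; the bookkeeping for the two inclusions and the distinctness argument are comparatively routine once Lemma \ref{BookLemma} is in hand.
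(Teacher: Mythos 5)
Your proposal is correct, but for the main inclusion it takes a genuinely different route from the paper. The paper goes by \emph{deletion}: given $[t]$ with tail on $B_k$, it excises each excursion off $B_k$, observing that the endpoints $t_{q-1}$ and $t_p$ flanking an excursion must both lie within $N$ of the basepoint (any step leaving or entering $B_k$ passes through $x_0$ in the wedge metric), so the excised sequence is still an $N$-sequence and a subsequence of $t$; this produces a sequence $r$ lying entirely on $B_k$, and the paper then \emph{cites} \cite[Theorem 3.6]{borno} for the fact that an $N$-sequence on a single ray going to infinity is equivalent to the standard sequence $s_k$. You instead go by \emph{insertion}: you build one common supersequence of $s$ and $s_k$, handling the finite prefix (where the excursions live) by a finite $1$-path and the tail by refining the walk on $B_k$ and selecting first-passage positions at successive integers. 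This yields a chain of length two, $s\sim t\sim s_k$, and is fully self-contained — you are in effect reproving the cited ray result inline, which is a real advantage if one does not have \cite{borno} at hand. One small detail to nail down: ``steps of size at most $1$'' does not by itself force the refined walk to \emph{attain} each integer value exactly (a step from $F-\tfrac12$ to $F+\tfrac12$ skips $F$), so you should specify that the points you insert between $s_n$ and $s_{n+1}$ include all integers in the traversed interval; with that stipulation your first-passage bookkeeping goes through as stated. Your distinctness argument — propagating the eventual ray along a chain via the ``further'' clause of Lemma \ref{BookLemma} — is exactly the paper's.
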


\begin{proof}
First we show that if $i\neq j$, $[s_i]\neq[s_j]$. Suppose to the contrary that $[s_i]=[s_j]$. Then there is a list $t_1,\ldots,t_k$ of $N$-sequences going to infinity with $s_i\sim t_1\sim t_2\sim\cdots\sim t_k\sim s_j$. By Lemma \ref{BookLemma} $t_1$ must eventually lie on $B_i$. Likewise $t_2$, $t_3$, and finally $s_j$ must eventually lie on $B_i$. But $s_j$ lies entirely on $B_j$, a contradiction.

Now suppose $[t]\in \sigma_{N}(B,x_0)$. By Lemma \ref{BookLemma} there is an $M>0$ and $k\in\mathbb N$ so that for all $m\geq M$, $t_m$ lies on $B_k$. We show that $[t]=[s_k]$. We create a new sequence $r$ equivalent to $t$ whose terms all lie on $B_k$. We can then see that $r$ is equivalent to $s_k$ as in the proof of \cite[Theorem 3.6]{borno}. We know that $t_m$ lies on $B_k$ for $m\geq M$. Let $t_q$ be the first term of the sequence that is not the basepoint or a point on $B_k$. If no such point exists we are done. Let $t_p$ be the first point of the sequence after $t_q$ that is the basepoint or a point on $B_k$. Define $r_1$ to be the sequence $t_0,\ldots,t_{q-1},t_{p},t_{p+1},\ldots$. Now $t_{q-1}$ and $t_p$ both lie on $B_k$ and are distance at most $N$ from the basepoint. Thus $\dist(t_{q-1},t_p)\leq N$ so $r_1$ is an $N$-sequence and $t\sim r_1$. We continue by induction, finally defining a sequence $r$ whose terms all lie on $B_k$.
\end{proof}

We define a subspace of $B$ that we call the discrete open book $D$. Let $D_i=\{in:n\in\mathbb N\cup \{0\}\}$. Thus $D_i$ has points that are distance $i$ apart. Define $D=\bigvee D_i$ based at the points $0$. Again, denote the wedge point as $x_0$. The following theorem implies that $D$ is not $\sigma$-stable.

\begin{theorem}\label{indsigmaD}
Let $s_i$ be the sequence in $D$ based at $x_0$ and lying on $D_i$ where each term $s_{in}=in$, $n\in\mathbb N\cup\{0\}$. Let $N\geq 1$. Then $\sigma_{N}(D,x_0)=\{[s_1],[s_2],\ldots ,[s_N]\}$.
\end{theorem}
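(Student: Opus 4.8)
The plan is to mirror the proof of Theorem \ref{indsigmaB}, exploiting that $D$ is an (isometric) subspace of $B$, so that every $N$-sequence in $D$ is also an $N$-sequence in $B$ and Lemma \ref{BookLemma} applies to it directly. The one genuinely new ingredient, which is what caps the number of classes at $N$, is the observation that an $N$-sequence going to infinity can only populate the rays $D_k$ with $k\leq N$. First I would record the two easy facts. For $1\leq i\leq N$ the sequence $s_i$ really is an $N$-sequence going to infinity, since consecutive terms on $D_i$ are at distance $i\leq N$ and $\dist(s_{in},x_0)=in\to\infty$. Conversely, if an $N$-sequence $t$ going to infinity eventually lies on $D_k$ (as it must, by Lemma \ref{BookLemma}), then $k\leq N$: two distinct points of $D_k$ are at distance a positive multiple of $k$, so if $k>N$ no two distinct points of $D_k$ are within $N$, forcing the tail of $t$ to be constant and contradicting $(t_i)\to\infty$. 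Hence only $D_1,\ldots,D_N$ are relevant.

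Distinctness of $[s_1],\ldots,[s_N]$ is then identical to the corresponding step of Theorem \ref{indsigmaB}: if $[s_i]=[s_j]$ with $i\neq j$, a finite chain $s_i\sim t_1\sim\cdots\sim t_m\sim s_j$ together with the ``Further'' clause of Lemma \ref{BookLemma} forces each $t_\ell$, and finally $s_j$, to lie eventually on $B_i$, contradicting that $s_j$ lies entirely on $B_j$.

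For exhaustiveness, given $[t]\in\sigma_N(D,x_0)$, I would localize $t$ to a single ray $D_k$ with $1\leq k\leq N$ as above, then run the excursion-removal argument from Theorem \ref{indsigmaB} to replace $t$ by an equivalent $N$-sequence $r$ whose terms all lie on $D_k$. This step survives the passage to the discrete book because the two bridging points $t_{q-1}$ and $t_p$ produced there still lie on $D_k$ at positions at most $N$ from $x_0$, so they remain within distance $N$ of one another. Finally I would show $r\approx s_k$. Rather than appeal directly to \cite[Theorem 3.6]{borno}, I would note that since $k\leq N$ adjacent points of $D_k$ are within $N$, so one can build a common supersequence $w$ of $r$ and $s_k$ by interleaving the terms of $r$ and of $s_k$ in order and inserting, between successive chosen points, a monotone walk along adjacent points of $D_k$. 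Every step of $w$ is then at most $N$, $w$ contains both $r$ and $s_k$ as subsequences, and since the tails of $r$ and $s_k$ both escape to infinity the connecting walks stay large, so $(w_i)\to\infty$; thus $r\sim w\sim s_k$ and $[t]=[r]=[s_k]$.

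I expect the main obstacle to be precisely the single new phenomenon that distinguishes $D$ from $B$: proving $k\leq N$, and correspondingly verifying that the single-ray equivalence $r\approx s_k$ still goes through with the coarser spacing of $D_k$. Everything else is a transcription of the proof of Theorem \ref{indsigmaB}; the only place discreteness could break the argument is the construction of the common supersequence, which is exactly where the hypothesis $k\leq N$ (equivalently $i\leq N$) is essential.
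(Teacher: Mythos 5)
Your proposal is correct and takes essentially the same route as the paper, which likewise reduces everything to the proof of Theorem \ref{indsigmaB} via Lemma \ref{BookLemma}, with the single new ingredient being exactly your observation that $k\leq N$ because successive points of $D_k$ are at distance $k$ while $t$ is an $N$-sequence going to infinity. The only divergence is that where the paper outsources the single-ray equivalence $r\approx s_k$ to \cite[Theorem 3.6]{borno}, you make that step self-contained with an explicit interleaved common supersequence (valid, since monotone walks along $D_k$ have step size $k\leq N$ and stay large in the tail), which is a filling-in of a cited step rather than a genuinely different argument.
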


\begin{proof}
The proof is similar to that of Theorem \ref{indsigmaB}. Given $[t]\in \sigma_{N}(D,x_0)$, by Lemma \ref{BookLemma} there is an $M>0$ and $k\in\mathbb N$ so that for all $m\geq M$, $t_m$ lies on $B_k$. The difference here is that we must have $k\leq N$ since the distance between successive points on $D_k$ is $k$ and $t$ is an $N$-sequence that goes to infinity.
\end{proof}

\begin{corollary}
The open book $B$ and the discrete open book $D$ are not coarsely equivalent.
\end{corollary}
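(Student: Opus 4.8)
The plan is to argue by contradiction through the main theorem of Section~2: if $B$ and $D$ were coarsely equivalent, then $\ind\sigma(B)$ and $\ind\sigma(D)$ would be equivalent as direct sequences, so it suffices to show that these two direct sequences are \emph{not} equivalent. The conceptual point I want to stress first — and the reason this example is offered as motivation for working with direct sequences rather than bare direct limits — is that the two direct limits are indistinguishable by cardinality. By Theorem~\ref{indsigmaD} the sets $\sigma_N(D,x_0)$ form a strictly increasing chain $\{[s_1]\}\subset\{[s_1],[s_2]\}\subset\cdots$ whose direct limit is countably infinite, and by Theorem~\ref{indsigmaB} the direct limit of the $\sigma_N(B,x_0)$ is likewise countably infinite. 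Hence comparing the limits alone gives nothing; the finite-versus-infinite structure of the individual \emph{levels} is what must be exploited.

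The first step is to record the structural features of each sequence. For $B$, Theorem~\ref{indsigmaB} gives that every $\sigma_N(B,x_0)$ with $N\geq1$ is countably infinite, and the bonding composite $\phi_{ij}$ sends $[s_k]_i\mapsto[s_k]_j$; since the classes $[s_k]$ are pairwise distinct at every level, each $\phi_{ij}\colon\sigma_i(B,x_0)\to\sigma_j(B,x_0)$ is injective (indeed bijective). For $D$, Theorem~\ref{indsigmaD} gives that every $\sigma_N(D,x_0)$ is finite, with exactly $N$ elements. These are the only facts I will need.

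The decisive step is then a pigeonhole argument against the equivalence identity. Suppose toward a contradiction that $\ind\sigma(B)$ and $\ind\sigma(D)$ are equivalent, witnessed by morphisms $\{f_i\colon\sigma_i(B,x_0)\to\sigma_{u(i)}(D,x_0)\}$ and $\{g_i\colon\sigma_i(D,x_0)\to\sigma_{v(i)}(B,x_0)\}$ with $g_{u(i)}\circ f_i=\phi_{i\,v(u(i))}$. Fix any $i\geq1$. The right-hand side is an injective map between countably infinite sets. However, $f_i$ maps the infinite set $\sigma_i(B,x_0)$ into the finite set $\sigma_{u(i)}(D,x_0)$, so $f_i$ is not injective, whence the composite $g_{u(i)}\circ f_i$ is not injective. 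This contradicts the injectivity of $\phi_{i\,v(u(i))}$, and the contradiction finishes the proof; I note that only one of the two compatibility identities is needed, and the normalizations on $u$ are irrelevant since $\sigma_{u(i)}(D,x_0)$ is finite no matter what $u(i)$ is.

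I expect the only genuine subtlety to be the conceptual one already flagged: because both direct limits have the same cardinality, the argument cannot live at the level of limits and must instead use the morphism-compatibility identity to force an infinite set to factor through a finite one. Everything else — the infinitude of each $\sigma_N(B,x_0)$, the finiteness of each $\sigma_N(D,x_0)$, and the injectivity of the $B$-bonding maps — is immediate from Theorems~\ref{indsigmaB} and~\ref{indsigmaD}, so no further computation should be required.
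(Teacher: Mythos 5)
Your proof is correct and takes essentially the same approach as the paper: the paper likewise observes that the bonding map $\phi_{NL}$ of $\ind\sigma(B)$ (the identity on a countably infinite set) cannot factor as $\sigma_N(B)\to\sigma_M(D)\to\sigma_L(B)$ because $\sigma_M(D)$ is finite. Your pigeonhole/injectivity phrasing just makes explicit the cardinality obstruction the paper states in one line, and your remark that the direct limits alone (both $\cong\mathbb N$) cannot distinguish the spaces matches the paper's closing comment.
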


\begin{proof}
According to Theorem \ref{indsigmaD} $\ind\sigma(D)$ is the sets $\{[s_1],[s_2],\ldots,[s_N]\}$ with the bonding function $\phi_N:\{[s_1],[s_2],\ldots,[s_N]\}\to\{[s_1],[s_2],\ldots,[s_{N+1}]\}$ being inclusion. According to \ref{indsigmaB}, $\ind\sigma(B)$ is the sets $\{[s_1],[s_2],\ldots\}$ with the bonding function $\psi_N:\{[s_1],[s_2],\ldots\}\to\{[s_1],[s_2],\ldots\}$ being the identity. These direct sequences are not equivalent since the identity function $\sigma_N(B)\to\sigma_L(B)$ cannot factor as $\sigma_N(B)\to\sigma_M(D)\to\sigma_L(B)$ since $\sigma_M(D)$ is finite.
\end{proof}

The previous corollary illustrates the power of studying the direct sequence rather than merely the direct limit. We have that $\sigma(D)\cong \sigma(B)\cong \mathbb N$.

\appendix
\section{Direct limits}

Given a direct sequence $\{X_i,\phi_i\}$, its direct limit $\varinjlim X_i$ is defined as follows. Consider the disjoint union $\bigsqcup X_i$. We define an equivalence relation on $\bigsqcup X_i$ as follows. Given $x_i\in X_i$ and $x_j\in X_j$, $x_i$ is related to $x_j$ if there is some $k\geq i,j$ such that $\phi_{ik}(x_i)=\phi_{jk}(x_j)$. The set of equivalence classes is the direct limit.

A morphism $\{f_i:X_i\to Y_{u(i)}\}$ between direct sequences $\{X_i,\phi_i\}$ and $\{Y_i,\psi_i\}$ induces a function $f:\varinjlim X_i\to\varinjlim Y_i$. Given $[x_i]\in\varinjlim X_i$, $x_i\in X_i$, set $f([x_i])=[f_i(x_i)]$. This function is well defined since if $\phi_{ik}(x_i)=\phi_{jk}(x_j)$, $f_k(\phi_{ik}(x_i))=f_k(\phi_{jk}(x_j))$ so $\psi_{u(i)u(k)}(f_i(x_i))=\psi_{u(j)u(k)}(f_i(x_i))$.

\begin{proposition}
Suppose two direct sequences $\{X_i,\phi_i\}$ and $\{Y_i,\psi_i\}$ are equivalent. Then $\varinjlim X_i$ and $\varinjlim Y_i$ are equivalent as sets.
\end{proposition}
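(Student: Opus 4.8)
The plan is to use the induced functions on direct limits constructed just above in this appendix, and to show that the two morphisms witnessing the equivalence induce mutually inverse bijections. Write $\{f_i:X_i\to Y_{u(i)}\}$ and $\{g_i:Y_i\to X_{v(i)}\}$ for the two morphisms appearing in the definition of equivalence. By the paragraph preceding the statement, $\{f_i\}$ induces a well-defined function $f:\varinjlim X_i\to\varinjlim Y_i$ with $f([x_i])=[f_i(x_i)]$, and likewise $\{g_i\}$ induces $g:\varinjlim Y_i\to\varinjlim X_i$ with $g([y_j])=[g_j(y_j)]$. It then suffices to check that $g\circ f$ is the identity on $\varinjlim X_i$ and that $f\circ g$ is the identity on $\varinjlim Y_i$, for then $f$ is a bijection and the two direct limits are equivalent as sets.

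The single fact about direct limits that I would record first is that for $j\geq i$ and any $x_i\in X_i$ one has $[\phi_{ij}(x_i)]=[x_i]$ in $\varinjlim X_i$; this is immediate from the definition of the equivalence relation on $\bigsqcup X_i$ by taking $k=j$, since $\phi_{jj}$ is the identity. With this in hand the computation is short. Given $[x_i]\in\varinjlim X_i$ with $x_i\in X_i$, we have $f([x_i])=[f_i(x_i)]$ with $f_i(x_i)\in Y_{u(i)}$, and hence $g(f([x_i]))=[g_{u(i)}(f_i(x_i))]$. The equivalence condition $g_{u(i)}\circ f_i=\phi_{i~v(u(i))}$ gives $g_{u(i)}(f_i(x_i))=\phi_{i~v(u(i))}(x_i)$, an element of $X_{v(u(i))}$. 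Since the normalizations $u(i)\geq i$ and $v(i)\geq i$ that may be assumed for morphisms yield $v(u(i))\geq u(i)\geq i$, the fact above gives $[\phi_{i~v(u(i))}(x_i)]=[x_i]$, so $g\circ f=\mathrm{id}$.

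The computation for $f\circ g$ is entirely symmetric, using the second equivalence condition $f_{v(i)}\circ g_i=\psi_{i~u(v(i))}$ together with the analogous statement $[\psi_{ij}(y_i)]=[y_i]$ in $\varinjlim Y_i$; this shows $f\circ g=\mathrm{id}$ and completes the argument. I do not expect a genuine obstacle here: the only point requiring care is the book-keeping with the index functions $u$ and $v$, namely verifying that the composite bonding maps $\phi_{i~v(u(i))}$ and $\psi_{i~u(v(i))}$ are actually defined, i.e. that their subscripts are nondecreasing, which is exactly what the normalizations $u(i)\geq i$ and $v(i)\geq i$ guarantee. Once those indices are in order, both identities reduce to the single observation that applying a finite string of bonding maps does not change the class of an element in the direct limit.
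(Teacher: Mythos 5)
Your proof is correct and follows essentially the same route as the paper's: both use the induced functions $f$ and $g$ on the direct limits and reduce each composite to a bonding map $\phi_{i\,v(u(i))}$ (resp. $\psi_{i\,u(v(i))}$) via the equivalence conditions, concluding since bonding maps do not change classes in the direct limit. Your explicit recording of the fact $[\phi_{ij}(x_i)]=[x_i]$ and the check that the index normalizations $u(i)\geq i$, $v(i)\geq i$ make the composite bonding maps well defined is a welcome bit of extra care that the paper leaves implicit.
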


\begin{proof}
Let $\{f_i:X_i\to Y_{u(i)}\}$ and $\{g_i:Y_i\to X_{v(i)}\}$ be morphisms that make up the equivalence. We show that the compositions of the induced functions $f$ and $g$ are the identities. First consider $g\circ f$. Suppose $[x_i]\in\varinjlim X_i$, $x_i\in X_i$. Then $g(f([x_i]))=g([f_i(x_i)])=[g_{u(i)}(f_i(x_i))]$. But $g_{u(i)}(f_i(x_i))=\phi_{i~v(u(i))}(x_i)$ so $[g_{u(i)}(f_i(x_i))]=[x_i]$. A similar argument shows that $f\circ g$ is the identity on $Y$.
\end{proof}

\end{document}